\documentclass[article]{amsart}
\usepackage[utf8]{inputenc}    
 
\usepackage[french,english]{babel}     
 
\usepackage[cyr]{aeguill}      
\usepackage{amsmath, amsthm, amsfonts,stmaryrd }
\usepackage{mathrsfs}                      
\usepackage{mathtools}                     
\usepackage{booktabs}
\usepackage{todonotes}
\usepackage[all]{xy}
\usepackage[utf8]{inputenc}
\usepackage{mathtools}                     
\usepackage{todonotes}
\usepackage{ marvosym }
\usepackage{bbm}                         
\usepackage{framed}
\usepackage{graphicx}   

\usepackage{hyperref} 
\hypersetup{
    colorlinks=true,
    linkcolor=blue,  
    urlcolor=cyan,
    pdftitle={RiemannianSplines},
    pdfauthor={R. TAHRAOUI, F.X. VIALARD},
} 
\usepackage{theoremref}

\numberwithin{equation}{section}
\theoremstyle{plain}
\newtheorem{remark}{Remark}
\newtheorem{theorem}{Theorem}
\newtheorem{lemma}[theorem]{Lemma}

\newtheorem{proposition}[theorem]{Proposition}

\theoremstyle{definition}


\newcommand{\ms}[1]{\mathbb{#1}}
\newcommand{\mc}[1]{\mathcal{#1}}

\def\om{\omega}

\def\Om{\Omega}
 
\def\o{\circ} 
\def\i{^{-1}}

\def\R{{\mathbb R}}

\def\ad{\operatorname{ad}}

\let\on=\operatorname

\newcommand{\ud}{\,\mathrm{d}}

\let\mc=\mathcal
\let\mf=\mathfrak
\def\Inc=\operatorname{Inc}

\newcommand{\eqdef}{\ensuremath{\stackrel{\mbox{\upshape\tiny def.}}{=}}}

\begin{document}
\title[Variational higher-order interpolation on the group of diffeomorphisms]{Variational second-order interpolation on the group of diffeomorphisms with a right-invariant metric}
\author{Fran\c{c}ois-Xavier Vialard}
\maketitle
\begin{abstract}
In this note, we propose a variational framework in which the minimization of the acceleration on the group of diffeomorphisms endowed with a right-invariant metric is well-posed. It relies on constraining the acceleration to belong to a Sobolev space of higher-order than the order of the metric in order to gain compactness. It provides the theoretical guarantee of existence of minimizers which is compulsory for numerical simulations.
\end{abstract}

\section{Introduction}

The question of interpolating a time-sequence of shapes with a curve and representing a shape evolution with few parameters have been addressed in the literature related to shape analysis and medical imaging since the last ten years. Several methods have been proposed and studied and they essentially rely on extension of standard tools available in Euclidean geometry to shape spaces. In this direction, we mention geodesic regression, cubic regression, kernel methods... The generalization of these tools to infinite dimensional setting are sometimes complicated by the fact that the shape space is not a flat space, nor a finite dimensional space. However, the shape space is usually endowed with a Riemannian structure and most often in infinite dimensions. The generalizations of this Euclidean tools are often introduced by variational formulations, the simplest example being the case of shortest path between two shapes, i.e. geodesics on the space of shapes. Even in that particular example, finding a variational setting in which the object of interest is well defined is of interest, since the existence of an extremum is not guaranteed in general and is complicated by the infinite dimensional setting. For example, in the case of group of diffeomorphisms, this question is addressed in \cite{CompletenessDiffeomorphismGroup}, in which the authors prove that the group of diffeomorphisms of the Euclidean space endowed with a right-invariant Sobolev metric of high enough order is complete in the sense of the Hopf-Rinow theorem. The case of the group of diffeomorphisms with right-invariant metric is relevant for applications in medical imaging and in particular for the problem of diffeomorphic image matching \cite{laurentbook,0855.57035,BegIJCV}. It is also natural to study and develop higher-order interpolations in the space of shapes, which has been actively developed in finite dimensions \cite{birkhoff65,VariationalStudySplines,Noakes1,splinesCk,Crouch,splinesanalyse,Koiso}. It was also extensively used and numerically developed in image processing and computer vision \cite{MumfordElastica,Masnou1,PamiSplines,Samir,CAD,Chan02eulerselastica}. In the past few years, these higher-order models have been introduced in biomedical imaging for interpolation of a time sequence of shapes. They have been proposed in \cite{TrVi2010} for a diffeomorphic group action on a finite dimensional manifold and further developed for general invariant higher-order lagrangians in \cite{HOSplines1,HOSplines2} on a group. A numerical implementation together with a generalized model have been proposed in \cite{SinghVN15} in the context of medical imaging applications. However, in all these articles, the question of existence of an extremum is not treated. An attempt is given in \cite{Vialard2016} where the exact relaxation of the problem is shown on the group of diffeomorphisms of the interval $[0,1]$. The main result of \cite{Vialard2016} consists in providing the existence of a minimizer in a larger space where the relaxation is defined. Although it does not completely solve the problem, it shows that existence of cubic splines for a group of diffeomorphisms with a right-invariant metric is non trivial. Let us discuss where the difficulty comes in a Riemannian setting. Riemannian splines are minimizers of 
\begin{equation}\label{SplinesFunctional}
\mathcal{J}(x) = \int_0^1 g\left( \frac{D}{Dt} \dot{x},\frac{D}{Dt}\dot{x}  \right) \ud t\,,
\end{equation}
where $(M,g)$ is a Riemannian manifold, $\frac{D}{Dt}$ is its associated covariant derivative and $x$ is a sufficiently smooth curve from $[0,1]$ in $M$ satisfying first order boundary conditions, i.e. $x(0),\dot{x}(0)$ and $x(1),\dot{x}(1)$ are fixed.
The term $$\frac{D}{Dt} \dot{x} = \ddot{x}  + \Gamma(x)(\dot{x},\dot{x})$$ (written in coordinates, with $\Gamma$ the Christoffel symbols) contains nonlinearities which contribute in the variational problem \eqref{SplinesFunctional} by possibly generating high-frequency oscillations in the space variable. 
\par
Although this notion of Riemannian cubics could not be well defined in general, it is possible to slightly modify it to make it well-posed. A modification of this type has recently been proposed in \cite{WirthSplines} in their framework.
In this paper, we propose a simple variational setting which makes the second-order variational problem well-posed at the expense of increasing the regularity of the group of diffeomorphisms on which the second-order interpolation is feasible. For practical applications, this gain of smoothness, or loss of controllability of the diffeomorphism does not matter so much since smoothness is preferred in medical image registration. However, the theoretical existence is guaranteed. The main result of the paper is the following
\begin{theorem}[Main result]
Let $\Om$ be a bounded domain in $\R^d$ and $s'\geq s+1$.
There exists a minimizer to the functional  
\begin{equation}\label{SplinesFunctional}
\mathcal{J}(x) = \int_0^1 \left\| \frac{D}{Dt} \dot{x} \o x\i \right\|^2_{H^{s'}}  \ud t\,,
\end{equation}
on the loop space $\Om_{0,1}(\mc G_{H^{s'}(\Om,\R^d)})$ where $ \frac{D}{Dt}$ is the covariant derivative associated with the right-invariant $H^s$ metric.
\end{theorem}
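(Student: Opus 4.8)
The plan is to run the direct method of the calculus of variations in right-trivialized (Eulerian) coordinates, the gain of one spatial derivative encoded in the hypothesis $s'\geq s+1$ being exactly what supplies the compactness needed to pass to the limit in the nonlinear acceleration term. First I would rewrite everything through the Eulerian velocity $v\eqdef\dot x\o x\i$. For a right-invariant metric the covariant acceleration right-trivializes as
\begin{equation}
\frac{D}{Dt}\dot x\o x\i = \dot v + \Gamma(v,v), \qquad \Gamma(v,v) = A\i\ad_v^*(Av),
\end{equation}
where $A=(1-\Delta)^s$ is the inertia operator of the $H^s$ metric and $\ad^*$ is the coadjoint map. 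Two structural facts drive the argument: (i) $\Gamma$ extends to a bounded bilinear map $H^{q}\times H^{q}\to H^{q-1}$ for every $q\geq s$ (smoothness of the geodesic spray of a sufficiently high-order right-invariant metric), so it is in particular well-defined and continuous on $H^{s'-1}$ precisely because $s'-1\geq s$; and (ii) the algebraic identity $\ad_v v=0$ yields the pointwise cancellation $\langle \Gamma(v,v),v\rangle_{H^s}=0$, which is nothing but energy conservation along geodesics.

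Then I take a minimizing sequence $x_n$, with velocities $v_n$ and accelerations $a_n\eqdef\dot v_n+\Gamma(v_n,v_n)$; the bound $\mathcal J(x_n)\leq C$ reads $\|a_n\|_{L^2([0,1],H^{s'})}\leq C$. The crucial a priori estimate is obtained in two steps. Using (ii), $\tfrac12\frac{\ud}{\ud t}\|v_n\|_{H^s}^2 = \langle a_n,v_n\rangle_{H^s}$, so $\frac{\ud}{\ud t}\|v_n\|_{H^s}\leq \|a_n\|_{H^{s'}}$ and the fixed initial datum gives a uniform bound on $\sup_t\|v_n(t)\|_{H^s}$, hence on $\sup_t\|v_n(t)\|_{C^1}$ by Sobolev embedding. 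Feeding this into the higher-order energy identity, the top-order term of $\langle\Gamma(v_n,v_n),v_n\rangle_{H^{s'}}$ cancels by the same mechanism as in (ii) — a computation legitimate through its commutator reformulation (Kato--Ponce) or on a Galerkin approximation — leaving $|\langle\Gamma(v_n,v_n),v_n\rangle_{H^{s'}}|\leq C\|v_n\|_{C^1}\|v_n\|_{H^{s'}}^2$. Gronwall then turns $\tfrac12\frac{\ud}{\ud t}\|v_n\|_{H^{s'}}^2\leq \|a_n\|_{H^{s'}}\|v_n\|_{H^{s'}}+C\|v_n\|_{H^{s'}}^2$ into a uniform bound on $\sup_t\|v_n(t)\|_{H^{s'}}$. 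Consequently $v_n$ is bounded in $L^\infty([0,1],H^{s'})$ and, since $\dot v_n=a_n-\Gamma(v_n,v_n)$, the time derivative $\dot v_n$ is bounded in $L^2([0,1],H^{s'-1})$.

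Compactness and passage to the limit come next. Since $\Om$ is bounded the embedding $H^{s'}\hookrightarrow H^{s'-1}$ is compact, so the Aubin--Lions--Simon lemma yields a subsequence with $v_n\to v$ strongly in $C([0,1],H^{s'-1})$, while $a_n\rightharpoonup a$ weakly in $L^2([0,1],H^{s'})$ and $v\in L^\infty([0,1],H^{s'})$. By (i), $\Gamma$ is continuous on $H^{s'-1}$, hence $\Gamma(v_n,v_n)\to\Gamma(v,v)$ in $C([0,1],H^{s'-2})$, and passing to the limit in $a_n=\dot v_n+\Gamma(v_n,v_n)$ identifies $a=\dot v+\Gamma(v,v)$. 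Reconstructing the curve from $v$ through the flow $\dot x=v\o x$ (continuity of the flow map on $\mc G_{H^{s'}}$) produces an admissible $x\in\Om_{0,1}(\mc G_{H^{s'}})$; the strong convergence $v_n\to v$, together with the common fixed initial point, transfers the first-order boundary data at both endpoints to the limit. Finally, weak lower semicontinuity of the $L^2([0,1],H^{s'})$ norm gives $\mathcal J(x)=\int_0^1\|a\|_{H^{s'}}^2\,\ud t\leq\liminf_n\mathcal J(x_n)$, so $x$ is a minimizer.

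The main obstacle is the a priori estimate: the quadratic Christoffel term makes the energy identity for $\|v\|_{H^{s'}}$ a priori cubic and thus liable to finite-time blow-up, so a naive Gronwall argument fails. The resolution is the two-tier scheme above — first controlling the metric norm $\|v\|_{H^s}$, where the cancellation (ii) is exact, to bound $\|v\|_{C^1}$, and only then closing a \emph{linear} Gronwall inequality for the higher norm via commutator estimates. Verifying that the top-order contribution of $\langle\Gamma(v,v),v\rangle_{H^{s'}}$ genuinely cancels, and that the hypothesis $s'\geq s+1$ is precisely what keeps the limiting velocity in the range $H^{q}$, $q\geq s$, on which the spray stays continuous, is the technical heart of the argument.
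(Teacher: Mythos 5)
Your overall architecture (direct method, Aubin--Lions, weak lower semicontinuity of the $L^2H^{s'}$ norm of the acceleration) is the same as the paper's, but you run it entirely in Eulerian variables, which is precisely what the paper avoids. The paper keeps the state $(\varphi,v)$ in Lagrangian coordinates, where the Christoffel map $\Gamma(\varphi)(v,v)$ of the $H^s$ metric is smooth with no loss of derivative (Ebin--Marsden), treats $\alpha=\bigl(\dot v+\Gamma(\varphi)(v,v)\bigr)\o\varphi\i$ as an $L^2([0,1],H^{s'})$ control, obtains global existence of the controlled flow from the exact $H^s$ energy identity alone (Lemma \ref{ThmExistenceLagrangian}), extracts compactness from a comparatively weak $H^1$-in-time bound on the momentum in a dual space (Lemma \ref{ThmExistenceEulerian} plus Aubin--Lions), and handles the one troublesome nonlinearity, $\alpha_n\o\varphi_n$, with the weak-continuity-of-composition Lemma \ref{ThmWeakComposition}. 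Your route dispenses with that composition lemma but pays for it with a uniform bound on $\sup_t\|v_n(t)\|_{H^{s'}}$ --- a much stronger statement than anything the paper proves or needs, and the place where your argument has a genuine gap.

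You justify that bound by claiming the top-order part of $\langle\Gamma(v_n,v_n),v_n\rangle_{H^{s'}}$ ``cancels by the same mechanism as in (ii)''. But the mechanism in (ii) is the exact algebraic identity $\ad_v v=0$; at the $H^{s'}$ level, writing $A=(1-\Delta)^s$, $m=Av$ and $\sigma=s'-2s$, the relevant quantity is
\begin{equation*}
\langle\Gamma(v,v),v\rangle_{H^{s'}}=\bigl\langle \ad_v^*m,\,(1-\Delta)^{\sigma}m\bigr\rangle_{L^2},
\end{equation*}
which does not vanish, and the estimate you need is the tame transport estimate $|\langle \ad_v^*m,(1-\Delta)^{\sigma}m\rangle|\lesssim\|v\|_{H^{s}}\|m\|_{H^{\sigma}}^2$ in the space $H^{\sigma}$ with $\sigma$ typically negative (for $s'=s+1$ one has $\sigma=1-s<-d/2$). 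This is a commutator estimate applied to the \emph{momentum} in a negative-order Sobolev space, with only $H^{s-1}$ control on $Dv$; it is not textbook Kato--Ponce, and it is exactly the hard Eulerian analysis that the Lagrangian formulation is designed to sidestep. The numerology is at least consistent --- the zeroth-order terms $(Dv)^{T}m$ and $(\on{div}v)\,m$ require the product estimate $H^{s-1}\times H^{\sigma}\to H^{\sigma}$, which holds precisely when $s-1\geq-\sigma$, i.e.\ $s'\geq s+1$ --- so your scheme is plausibly completable, but its central estimate is asserted rather than proven. A secondary slip: the boundedness of $\Gamma:H^q\times H^q\to H^{q-1}$ is not a consequence of the smoothness of the geodesic spray (a Lagrangian statement); it must be checked directly on $A\i\ad_v^*(Av)$, where it holds comfortably for $q\geq s+1$ but is borderline at $q=s$.
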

In other words, the acceleration is measured in a stronger space than the ambient space so that it will prevent from creating oscillations.

\section{Background on right-invariant metrics on diffeomorphisms group} \label{Background}

\par \textbf{Sobolev right-invariant metrics on the group of diffeomorphisms.} In \cite{CompletenessDiffeomorphismGroup}, the authors proved the following theorem
Let $M$ be either $\R^d$ or a compact manifold without boundary of dimension $d$. We define hereafter a group of diffeomorphisms of $M$ which is a complete metric space.
Consider a space $V$ a Hilbert space of vector fields on $M$ (rapidly decreasing at infinity in the unbounded case), left invariant by their flows, such that the inclusion map $V \hookrightarrow W^{1,\infty}(M,\R^d)$ is continuous. This hypothesis implies that the flow of a time dependent vector field in $L^2([0,1],V)$ is well defined, see \cite[Appendix C]{laurentbook}. Then, the set of flows at time $1$ defines a group of diffeomorphisms denoted by $\mathcal{G}_V$. Denoting 
\begin{equation}
\on{Fl}_1(\xi) = \varphi(1) 
\end{equation}
where $\varphi$ solves the flow equation
\begin{align} \label{EqFlow}
&\partial_t \varphi(t,x) = \xi(t,\varphi(t,x))\\
&\varphi(0,x) = x\, \text{ } \forall x \in D\,,
\end{align}
we define
\begin{equation}\label{EqDefGV}
\mathcal{G}_V \eqdef \{ \varphi(1) \,: \, \exists \, \xi \in L^2([0,1],V) \text{ s.t. } \on{Fl}_1(\xi) \}\,,
\end{equation}
which has been introduced by Trouvé in \cite{0855.57035}. On this group, Trouvé  defines a metric 
\begin{equation}
\on{dist}(\psi_1,\psi_0)^2 = \inf_{} \left\{\int_0^1 \| \xi \|_V^2 \ud t \,: \, \xi \in L^2([0,1],V) \text{ s.t. } \psi_1 = \on{Fl}_1(\xi) \circ \psi_0 \,\right\} 
\end{equation}
under which he proves that $\mathcal{G}_V$ is complete. In full generality, that is for a general space of vector fields $V$, very few properties are known on this group. For instance, it is a priori not a topological group, or more precisely, there is no known topological structure making it a topological group (the inversion need not be continuous). Moreover, there does not need to be a differentiable structure on this group.
However, for certain choices of spaces $V$, such structures are available and therefore more properties can be derived in this situation. Indeed, consider the group $\mc D^s(M)$, with $s > d/2+1$, which consists of all $C^1$-diffeomorphisms of Sobolev regularity $H^s$. It is known since the work of Ebin and Marsden \cite{Ebin1970} that $\mc D^s(M)$ is a smooth Hilbert manifold and a topological group. It only remains to prove that $\mathcal{G}_{H^s} = \mc D^s(M)_0$ (the connected component of identity) which is done in \cite[Section 8]{CompletenessDiffeomorphismGroup} and its main result is

\begin{theorem}\label{Thmdiff_hopf_rinow}
Let $M$ be $\R^d$ or a closed manifold and $s > d/2+1$. If $G^s$ is a smooth, right-invariant Sobolev-metric of order $s$ on $\mc D^s(M)$, then
\begin{enumerate}
\item
$(\mc D^s(M), G^s)$ is geodesically complete;
\item
$(\mc D^s(M)_0, \on{dist}^s)$ is a complete metric space;
\item
Any two elements of $\mc D^s(M)_0$ can be joined by a minimizing geodesic.
\end{enumerate}
The statements also hold for the subgroups $\mc D^s_\mu(M)$ and $\mc D^s_\om(M)$ of diffeomorphisms preserving a volume form $\mu$ or a symplectic structure $\om$.
\end{theorem}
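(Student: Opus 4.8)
The plan is to treat the three conclusions separately, since in infinite dimensions geodesic completeness, metric completeness and existence of minimizers are genuinely independent and none of them follows formally from the others. The common starting point is the local theory: I would first recall, following Ebin--Marsden and its extensions, that for $s>d/2+1$ the space $\mc D^s(M)$ is a smooth Hilbert manifold and a topological group, and that the geodesic spray of a smooth right-invariant metric $G^s$ is itself a \emph{smooth} vector field on $T\mc D^s(M)$. The non-trivial point is the absence of loss of derivatives: although the Euler--Arnold operator $u\mapsto\ad_u^* u$ naively loses a derivative, right-invariance and the structure of the inertia operator conspire to make the spray smooth. Granting this, the standard ODE theory on Hilbert manifolds yields local existence, uniqueness and smooth dependence of geodesics on their initial conditions.

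For conclusion (1) I would run a no-blow-up argument. Writing a geodesic in Eulerian coordinates as a velocity $u(t)$ solving the Euler--Arnold equation together with the flow equation $\partial_t\varphi=u\o\varphi$, the first observation is conservation of energy: $G^s(u(t),u(t))$ is constant, hence $\|u(t)\|_{H^s}$ is bounded along the geodesic. Suppose the maximal interval of existence were $[0,T)$ with $T<\infty$. Then $\int_0^T\|u(t)\|_{H^s}\ud t\le T\,\|u_0\|_{H^s}<\infty$, and the task is to show that such an integrable Eulerian velocity cannot let the Lagrangian flow degenerate before time $T$. I would control the flow in two stages: first its $C^1$-size, via $\tfrac{d}{dt}\log\|D\varphi\|_{L^\infty}\le C\|Du\|_{L^\infty}\le C'\|u\|_{H^s}$ and symmetrically for $\varphi\i$, which keeps $\|D\varphi\|_{L^\infty}$ and $\|D\varphi\i\|_{L^\infty}$ bounded away from $0$ and $\infty$ on $[0,T)$; then its full $H^s$-size by a Gr\"onwall estimate of the form $\tfrac{d}{dt}\|\varphi\|_{H^s}\le C\big(\|u\|_{H^s},\|D\varphi\|_{L^\infty}\big)\|\varphi\|_{H^s}$, obtained from the chain rule together with the algebra property of $H^s$ and the embedding $H^{s-1}\hookrightarrow C^1_b$. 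These bounds force $\varphi(t)$ to converge in $\mc D^s(M)$ as $t\to T^-$, so $(\varphi(T),u(T))$ is a genuine point of $T\mc D^s(M)$ from which the local theory extends the geodesic past $T$, contradicting maximality.

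For conclusions (2) and (3) I would not attempt an abstract Hopf--Rinow argument but instead exploit the variational structure of Trouv\'e's construction together with the identification $\mc G_{H^s}=\mc D^s(M)_0$. First I would check that the Riemannian geodesic distance $\on{dist}^s$ agrees with Trouv\'e's distance, which follows from right-invariance by rewriting the length of a Lagrangian path as $\int_0^1\|\xi\|_V\ud t$ for its Eulerian velocity $\xi$. Metric completeness of $(\mc D^s(M)_0,\on{dist}^s)$ is then inherited from the completeness of $\mc G_V$ established by Trouv\'e. For existence of minimizing geodesics between $\psi_0,\psi_1\in\mc D^s(M)_0$, I would use the direct method in $L^2([0,1],V)$: take a minimizing sequence of controls $\xi_n$ with $\psi_1=\on{Fl}_1(\xi_n)\o\psi_0$; the energy bound makes $\xi_n$ bounded, hence weakly convergent along a subsequence to some $\xi$; weak continuity of the endpoint map $\xi\mapsto\on{Fl}_1(\xi)$ preserves the constraint, while weak lower semicontinuity of $\int\|\cdot\|_V^2$ shows $\xi$ is optimal, yielding a minimizing geodesic.

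Finally, for the subgroups $\mc D^s_\mu(M)$ and $\mc D^s_\om(M)$, I would note that these are smooth closed submanifolds of $\mc D^s(M)$ on which the spray restricts smoothly, again by Ebin--Marsden, so the energy-conservation and no-blow-up argument of conclusion (1) applies verbatim, and closedness ensures that the limits produced above still preserve the volume form or the symplectic form. The step I expect to be the genuine obstacle is the no-blow-up estimate of the second paragraph: converting the single conserved quantity $\|u\|_{H^s}$ into control of the full $H^s$-geometry of $\varphi$ and $\varphi\i$ is exactly where the threshold $s>d/2+1$ is indispensable, and it is the only place where the specific analytic structure of Sobolev diffeomorphism groups, rather than soft functional analysis, does the work.
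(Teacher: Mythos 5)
This theorem is not proved in the paper at all: it is imported verbatim from \cite{CompletenessDiffeomorphismGroup}, and the only thing the text records about its proof is that the crucial ingredient is the existence and continuity of the flow map $\on{Fl}_t : L^1(I,\mf X^s(M)) \to \mc D^s(M)$. Your outline is therefore being measured against the cited reference rather than against anything in this paper, and as such it is a faithful reconstruction of the known strategy: smoothness of the spray without loss of derivatives (Ebin--Marsden) for the local theory, conservation of energy plus a no-blow-up Gr\"onwall estimate on $\varphi$ and $\varphi^{-1}$ for geodesic completeness, the identification $\mc G_{H^s}=\mc D^s(M)_0$ together with Trouv\'e's completeness for (2), and the direct method with weak continuity of the endpoint map for (3). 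The main structural difference is one of emphasis: where you treat the three conclusions as independent and locate the crux in the no-blow-up estimate of (1), the cited proof factors all three through the single lemma that $\on{Fl}$ is continuous from $L^1(I,\mf X^s(M))$ (and, for (3), from $L^2$ with its weak topology) into $\mc D^s(M)$; this one lemma simultaneously supplies your two-stage Gr\"onwall control of $\|\varphi\|_{H^s}$ and $\|\varphi^{-1}\|_{H^s}$, the convergence of flows along Cauchy sequences of velocities, and the passage to the limit in the endpoint constraint. Two small points you gloss over: energy conservation bounds $G^s(u,u)$, which controls $\|u\|_{H^s}$ only up to the constants of equivalence of the right-invariant metric with the flat $H^s$ inner product at the identity (harmless, but worth stating); and to restart the geodesic at $T$ you need convergence of $\dot{\varphi}(t)$, not merely of $\varphi(t)$, as $t\to T^-$, which requires a bound on the spray along the trajectory rather than only on the Eulerian velocity.
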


The crucial ingredient in the proof is showing that the flow map
\begin{equation}
\label{eq:intro_flow}
\on{Fl}_t : L^1(I, \mf X^s(M)) \to \mc D^s(M)
\end{equation}
exists and is continuous.
\par
In \cite{TrVi2010}, we introduced the use of cubic splines \textit{in the space of shapes} to interpolate a sequence of shapes that are time dependent.
Riemannian cubics (also called Riemannian splines) and probably more famous, its constrained alternative called Elastica belong to a class of problems that have been studied since the work of Euler (see the discussion in \cite{MumfordElastica}). Let us present the variational problem in a Riemannian setting. Riemannian splines are minimizers of 
\begin{equation}\label{SplinesFunctional}
\mathcal{J}(x) = \int_0^1 g\left( \frac{D}{Dt} \dot{x},\frac{D}{Dt}\dot{x}  \right) \ud t\,,
\end{equation}
where $(M,g)$ is a Riemannian manifold, $\frac{D}{Dt}$ is its associated covariant derivative and $x$ is a sufficiently smooth curve from $[0,1]$ in $M$ satisfying first order boundary conditions, i.e. $x(0),\dot{x}(0)$ and $x(1),\dot{x}(1)$ are fixed.
The case of Elastica consists in restricting the previous optimization problem to the set of curves that are parametrized by unit speed (when the problem is feasible), namely $g(\dot{x},\dot{x})=1$ for all time. To the best of our knowledge, the only paper that deals with analytical questions is \cite{splinesanalyse} where the authors show in particular the existence of minimizers of a second-order functional on the space curves on a complete finite dimensional Riemannian manifold.


In \cite{HOSplines1}, higher-order models are proposed on groups of diffeomorphisms but for the standard Riemannian cubics functional, no analytical study was provided. Indeed, in the case of a Lie group $G$ (and $\mathfrak{g}$ its Lie algebra) with a right-invariant metric ($\| \cdot \|_\mathfrak{g}$ denoting the norm on the Lie algebra), the covariant derivative can be written as follows:
Let $V(t) \in T_{g(t)}G$ be a vector field along a curve $g(t) \in G$
 \begin{equation}\label{Covariant_derivative_vf_prop}
      \frac{D}{Dt} V =\Big( \dot{\nu} + \frac{1}{2} \operatorname{ad}^\dagger_\xi \nu + \frac{1}{2} \operatorname{ad}^\dagger_\nu \xi - \frac{1}{2} [\xi, \nu]\Big) _G(g).
    \end{equation}
    where $\operatorname{ad}^\dagger$ is the metric adjoint defined by
     \begin{equation}\label{ad-dagger-def}
 \operatorname{ad}^\dagger_{\nu}{\kappa} := 
 (\operatorname{ad}^*_\nu (\kappa^\flat))^\sharp
 \end{equation}
 for any $\nu, \kappa\in \mathfrak{g}$ and $\flat$ and $\sharp$ are the musical operator for the 
cometric and metric operator. Therefore, the reduced lagrangian for \eqref{SplinesFunctional} is 
\begin{equation}\label{ReducedSplinesFunctional}
\mathcal{J}(x) = \int_0^1 \| \dot{\xi} + \ad^\dagger_\xi \xi\|^2_\mathfrak{g} \ud t\,.
\end{equation}
 where $\operatorname{ad}^\dagger$ is the metric adjoint, i.e.,
 it is written as
 \begin{equation}\label{ad-dagger-def}
 \operatorname{ad}^\dagger_{\nu}{\kappa} := 
 (\operatorname{ad}^*_\nu (\kappa^\flat))^\sharp
 \end{equation}
 for any $\nu, \kappa\in \mathfrak{g}$. 
 We can also formulate the variational problem on the dual of the Lie algebra $\mathfrak{g}^*$ by
\begin{equation}\label{ReducedSplinesFunctionalDualLieAlgebra}
\mathcal{J}(x) = \int_0^1 \| a(t) \|^2_{\mathfrak{g}^*} \ud t\,,
\end{equation}
under the constraint 
\begin{equation}
\dot{m} + \ad^*_{\xi} m = a\,.
\end{equation}

In infinite dimensions, there is a clear obstacle to use reduction since the operator $ \ad^\dagger$ is unbounded on the tangent space at identity due to a loss of derivative. 
However, using the smooth Riemannian structure on $\mc D^s$, functional \eqref{SplinesFunctional} is well defined.


The following proposition of \cite{splinesanalyse} is valid in infinite dimensions:
\begin{proposition}
Let $(M,g)$ be an infinite dimensional Riemannian manifold and $$\Omega_{0,1}(M) := \{  x \in H^2([0,1],M) \, | \, x(i) = x_i \,,\, \dot{x}(i) = v_i \text{ for } i=0,1 \}$$ be the space of paths with the first order boundary constraints for given $(x_0,v_0) \in TM$ and $(x_1,v_1) \in TM$. 
\noindent
The functional \eqref{SplinesFunctional} is smooth on $\Omega_{0,1}(M)$ and 
\begin{equation}
\mathcal{J}'(x)(v) = \int_0^1 g(\frac{D^2}{Dt^2}\dot{v},\frac{D}{Dt}\dot{x}) - g(R(\dot{x},\frac{D}{Dt}\dot{x}),v) \, \ud t\,.
\end{equation}
A critical point of $\mathcal{J}$ is a smooth curve that satisfies the Riemannian cubic equation
\begin{equation}
\frac{D^3}{Dt^3}\dot{x} - R(\dot{x},\frac{D}{Dt}\dot{x})\dot{x} = 0\,.
\end{equation}
\end{proposition}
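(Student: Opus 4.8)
The plan is to compute the first variation of $\mathcal J$ directly, by differentiating along a smooth two-parameter variation and then moving all derivatives off the variation field by repeated integration by parts. First I would identify the tangent space: an admissible variation field $v\in T_x\Omega_{0,1}(M)$ is an $H^2$ vector field along $x$ satisfying $v(i)=0$ and $\frac{D}{Dt}v(i)=0$ for $i=0,1$. The first condition holds because the endpoints $x_\epsilon(i)=x_i$ are fixed; the second comes from differentiating the velocity constraint $\dot x_\epsilon(i)=v_i$ and invoking the symmetry lemma below. The smoothness of $\mathcal J$ on the Hilbert manifold $\Omega_{0,1}(M)$ then reduces to the smooth dependence on $x$ of $\frac{D}{Dt}\dot x=\ddot x+\Gamma(x)(\dot x,\dot x)$: since the domain is one-dimensional, $H^2([0,1])\hookrightarrow C^1([0,1])$, so $\dot x$ is continuous, $\ddot x\in L^2$, and with $\Gamma$ a smooth bundle map the acceleration lies in $L^2$ and depends smoothly on $x$, whence $\mathcal J(x)=\|\frac{D}{Dt}\dot x\|_{L^2}^2$ is smooth.

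For the first variation I would take a variation $c(\epsilon,t)$ with $c(0,\cdot)=x$ and $\partial_\epsilon c|_{\epsilon=0}=v$, write $T=\partial_t c$ and $S=\partial_\epsilon c$, and use the two standard tools of Riemannian geometry: the torsion-free symmetry identity $\frac{D}{D\epsilon}T=\frac{D}{Dt}S$ and the curvature commutation $\frac{D}{D\epsilon}\frac{D}{Dt}W-\frac{D}{Dt}\frac{D}{D\epsilon}W=R(S,T)W$. Differentiating under the integral and using metric compatibility of the connection gives $\frac{d}{d\epsilon}\mathcal J=2\int_0^1 g\big(\frac{D}{D\epsilon}\frac{D}{Dt}T,\frac{D}{Dt}T\big)\,\ud t$; substituting $\frac{D}{D\epsilon}\frac{D}{Dt}T=\frac{D^2}{Dt^2}S+R(S,T)T$ and evaluating at $\epsilon=0$ produces an integrand pairing $\frac{D^2}{Dt^2}v$ and $R(v,\dot x)\dot x$ against $\frac{D}{Dt}\dot x$. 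The symmetries of the curvature tensor (antisymmetry in the first pair together with pair symmetry) then give $g(R(v,\dot x)\dot x,\frac{D}{Dt}\dot x)=-g(R(\dot x,\frac{D}{Dt}\dot x)\dot x,v)$, which is exactly the displayed expression for $\mathcal J'(x)(v)$, up to the overall factor $2$ coming from differentiating the square.

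To reach the Euler--Lagrange equation I would integrate the term $g(\frac{D^2}{Dt^2}v,\frac{D}{Dt}\dot x)$ by parts twice, again using metric compatibility. The boundary terms vanish precisely because of the two endpoint conditions on $v$: the first integration yields $[g(\frac{D}{Dt}v,\frac{D}{Dt}\dot x)]_0^1$, killed by $\frac{D}{Dt}v(i)=0$, and the second yields $[g(v,\frac{D^2}{Dt^2}\dot x)]_0^1$, killed by $v(i)=0$. This leaves $\mathcal J'(x)(v)=\int_0^1 g\big(\frac{D^3}{Dt^3}\dot x-R(\dot x,\frac{D}{Dt}\dot x)\dot x,\,v\big)\,\ud t$, and the fundamental lemma of the calculus of variations forces the Riemannian cubic equation $\frac{D^3}{Dt^3}\dot x-R(\dot x,\frac{D}{Dt}\dot x)\dot x=0$ on any critical point.

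The main obstacle I anticipate is not the formal computation, which is classical, but the two regularity points specific to the infinite-dimensional setting. First, every manipulation above presupposes that the metric, the Levi--Civita connection, and the curvature tensor are smooth tensor fields on $M$ and that differentiation under the integral is legitimate; these are granted by working with the smooth (strong) Riemannian structure, and are exactly what the one-dimensional Sobolev embedding is used to guarantee. Second, the assertion that a critical point is a \emph{smooth} curve requires a bootstrap: the weak form of the cubic equation identifies the weak second covariant derivative $\frac{D^2}{Dt^2}\big(\frac{D}{Dt}\dot x\big)$ with $R(\dot x,\frac{D}{Dt}\dot x)\dot x$, which lies in $L^2$ for $x\in H^2$, so the acceleration gains regularity; reading this back into the right-hand side and iterating on the fourth-order equation $\frac{d^4x}{dt^4}=F(x,\dot x,\ddot x,\dddot x)$ with $F$ smooth drives $x$ to $C^\infty$. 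I would present this as a standard ODE bootstrap, noting that in the genuinely infinite-dimensional target one must check that the regularising estimates hold uniformly, which is where the analysis of \cite{splinesanalyse} does the substantive work.
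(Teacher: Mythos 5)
Your derivation is correct, but note that the paper itself offers no proof of this proposition: it is quoted verbatim from \cite{splinesanalyse} with the remark that the statement remains valid in infinite dimensions, so there is no in-paper argument to compare against. Your computation is the classical one (and essentially the one in the cited reference): the symmetry lemma $\frac{D}{D\epsilon}T=\frac{D}{Dt}S$, the curvature commutation formula, the pair/antisymmetry identities of $R$ to move the curvature term onto $v$, and two integrations by parts killed by the endpoint conditions $v(i)=0$, $\frac{D}{Dt}v(i)=0$. You also implicitly repair two typos in the displayed statement ($\frac{D^2}{Dt^2}\dot v$ should be $\frac{D^2}{Dt^2}v$, and $R(\dot x,\frac{D}{Dt}\dot x)$ is missing its third argument $\dot x$), and you correctly flag the absent factor of $2$.

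Two small points of care. First, for $x\in H^2$ the quantity $\frac{D}{Dt}\dot x$ is only $L^2$, so the two integrations by parts cannot be performed before regularity is established; the clean order is to read the first-variation identity as a weak equation asserting that $\frac{D}{Dt}\dot x$ has weak second covariant derivative $R(\dot x,\frac{D}{Dt}\dot x)\dot x\in L^2$, bootstrap to smoothness, and only then integrate by parts to exhibit the strong Euler--Lagrange equation. You state this in the right spirit in your final paragraph, but the logical order in the middle paragraph should be reversed. Second, in the genuinely infinite-dimensional setting the "fundamental lemma" step and the bootstrap require the metric to be a \emph{strong} smooth Riemannian metric (so that $g(w,\cdot)=0$ for all $w$ forces the argument to vanish, and so that $\Gamma$ and $R$ are smooth bounded multilinear maps); this is precisely the hypothesis under which the paper invokes the proposition on $\mc D^s$, and it is worth making explicit rather than deferring entirely to \cite{splinesanalyse}.
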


The critical points of $\mathcal{J}$ are sometimes called Riemannian cubics or cubic polynomials. The existence of minimizers does not follow from the corresponding proof in \cite{splinesanalyse} since it strongly relies on the finite dimension hypothesis to have compactness properties.

\section{The main result}
We formulate the main result on the flat torus but it can be generalized to bounded domains in $\R^d$ in a straightforward way.
\begin{theorem}\label{ThmSplines}
Let $\ms T^d$ be the $d$ dimensional flat torus and $s'\geq s+1$.
There exists a minimizer to the functional  
\begin{equation}\label{SplinesFunctional}
\mathcal{J}(x) = \int_0^1 \left\| \frac{D}{Dt} \dot{\varphi} \o \varphi \i \right\|^2_{H^{s'}}  \ud t\,,
\end{equation}
on the loop space $\Om_{0,1}(\mc G_{H^{s'}(\ms T^d,\R^d)})$ where $ \frac{D}{Dt}$ is the covariant derivative associated with the right-invariant $H^s$ metric.
\end{theorem}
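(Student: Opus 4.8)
The plan is to run the direct method of the calculus of variations on the Eulerian (reduced) variables. Given a minimizing sequence $(\varphi_n)$ in $\Om_{0,1}(\mc G_{H^{s'}})$, I would introduce the Eulerian velocity $\xi_n = \dot\varphi_n\o\varphi_n\i$ and the reduced acceleration $a_n = \frac{D}{Dt}\dot\varphi_n\o\varphi_n\i$, which by \eqref{ReducedSplinesFunctional} satisfies the constraint $\dot\xi_n + \ad^\dagger_{\xi_n}\xi_n = a_n$. Since $\mathcal{J}(\varphi_n)=\int_0^1\|a_n\|_{H^{s'}}^2\ud t$ is bounded, the sequence $(a_n)$ is bounded in $L^2([0,1],H^{s'})$, and the endpoint constraints fix $\xi_n(0)$ and $\varphi_n(0)$ in $H^{s'}$. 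The whole problem then reduces to turning the bound on $a_n$ into compactness for $\xi_n$ strong enough to pass to the limit in the quadratic term $\ad^\dagger_{\xi_n}\xi_n$ and to reconstruct a limiting flow that still lies in $\mc D^{s'}$.

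The heart of the argument is a two-tier a priori estimate for $\xi_n$ in terms of $a_n$ and the fixed initial data. First I would test the constraint against $\xi_n$ in the $H^s$ inner product. The crucial algebraic fact is that $\ad^\dagger$ is the metric adjoint for the order-$s$ metric, so $\langle \ad^\dagger_{\xi}\xi,\xi\rangle_{H^s}=\langle \xi,\ad_\xi\xi\rangle_{H^s}=0$ since $\ad_\xi\xi=\pm[\xi,\xi]=0$; the nonlinearity drops out exactly and one gets $\frac{d}{dt}\|\xi_n\|_{H^s}\le \|a_n\|_{H^{s'}}$, hence a uniform bound on $\xi_n$ in $L^\infty([0,1],H^s)$ with no smallness assumption and no blow-up. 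Second, I would redo the energy estimate in the stronger $H^{s'}$ norm. There the nonlinear term no longer vanishes, but $\langle\ad^\dagger_{\xi_n}\xi_n,\xi_n\rangle_{H^{s'}}$ reduces to a commutator between the transport operator $\ad_{\xi_n}=\pm[\xi_n,\cdot]$ and $(1-\Delta)^{s'-s}$, which by Kato--Ponce type estimates is controlled by $C\|\xi_n\|_{C^1}\|\xi_n\|_{H^{s'}}^2$. Using the $L^\infty H^s$ bound from the first tier together with $H^s\hookrightarrow C^1$ to bound $\|\xi_n\|_{C^1}$ uniformly, this becomes a linear Gronwall inequality $\frac{d}{dt}\|\xi_n\|_{H^{s'}}\le \|a_n\|_{H^{s'}}+C\|\xi_n\|_{H^{s'}}$, yielding a uniform bound on $\xi_n$ in $L^\infty([0,1],H^{s'})$. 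It is exactly this step that forces measuring the acceleration in $H^{s'}$: without $a_n\in H^{s'}$ one cannot close the estimate at that level, and without the first-tier $H^s$ bound the $H^{s'}$ estimate would be a Riccati inequality that might blow up before $t=1$.

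With both bounds in hand, compactness follows from Aubin--Lions. Since $\xi_n$ is bounded in $L^\infty H^{s'}$ and $\dot\xi_n=a_n-\ad^\dagger_{\xi_n}\xi_n$ is bounded in $L^2 H^{s'-1}$ (the quadratic term losing one derivative), the chain of embeddings $H^{s'}\hookrightarrow\hookrightarrow H^{s'-\ep}\hookrightarrow H^{s'-1}$ gives, up to a subsequence, $\xi_n\to\xi$ strongly in $C([0,1],H^{s'-\ep})$, while $a_n\rightharpoonup a$ in $L^2 H^{s'}$. Because $s'\ge s+1$, one has $s'-\ep>d/2+1$ for small $\ep$, so the bilinear map $(\xi,\eta)\mapsto\ad^\dagger_\xi\eta$ is continuous at this regularity and the strong convergence passes to the limit in the nonlinearity, giving $\dot\xi+\ad^\dagger_\xi\xi=a$. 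Lower semicontinuity of $\mathcal{J}$ is then just weak lower semicontinuity of the convex $L^2 H^{s'}$ norm, so $\int_0^1\|a\|_{H^{s'}}^2\ud t\le\liminf\mathcal{J}(\varphi_n)$. Finally I would reconstruct the limit as $\varphi(t)=\on{Fl}_t(\xi)\o\varphi(0)$ using the continuity of the flow map $\on{Fl}:L^1(I,\X^{s'})\to\mc D^{s'}$ of \eqref{eq:intro_flow}; the uniform convergence of $\xi_n$ and the fixed endpoints ensure $\varphi\in\Om_{0,1}(\mc G_{H^{s'}})$ with the prescribed first-order boundary data, so $\varphi$ is a minimizer.

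The main obstacle I anticipate is the second-tier $H^{s'}$ estimate: one must show that the loss of derivative in $\ad^\dagger_{\xi}\xi$ is only apparent, i.e. that $\langle\ad^\dagger_\xi\xi,\xi\rangle_{H^{s'}}$ is genuinely a commutator bounded by $\|\xi\|_{C^1}\|\xi\|_{H^{s'}}^2$ rather than by $\|\xi\|_{H^{s'+1}}\|\xi\|_{H^{s'}}$, which is precisely the tame-estimate mechanism underlying the smoothness of the geodesic spray on $\mc D^{s'}$ in the Ebin--Marsden theory. Keeping careful track of this, and of the fact that the reconstructed flow does not leave $\mc D^{s'}$, is where the real work lies; the compactness and lower-semicontinuity parts are then routine.
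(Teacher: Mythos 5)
Your overall strategy (direct method on the reduced variables) is not the one the paper follows, and the difference is not cosmetic: the paper explicitly declines to work with the reduced equation $\dot\xi+\ad^\dagger_\xi\xi=a$ precisely because ``$\ad^\dagger$ is unbounded on the tangent space at identity due to a loss of derivative''. Instead it works in Lagrangian coordinates, treats $\alpha=\bigl(\dot v+\Gamma(\varphi)(v,v)\bigr)\circ\varphi^{-1}$ as an $L^2([0,1],H^{s'})$ control, and never needs any $H^{s'}$ bound on the velocity: it only uses (i) weak $L^2H^{s'}$ convergence of $\alpha_n$ (free from the energy bound), (ii) compactness of the state at the \emph{lower} regularity level $H^s$, obtained from the $H^1([0,1],(H^{s})^*)$ bound of Lemma \ref{ThmExistenceEulerian} via Aubin--Lions, and (iii) the weak continuity of $(\alpha,\varphi)\mapsto\alpha\circ\varphi$ of Lemma \ref{ThmWeakComposition} to pass to the limit in the constraint. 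Your first-tier estimate is essentially the paper's computation in Lemma \ref{ThmExistenceLagrangian} ($f'(t)=\langle\alpha,u\rangle_{H^s}$, i.e.\ the cancellation $\langle\ad^\dagger_\xi\xi,\xi\rangle_{H^s}=0$), so up to that point the two arguments agree.

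The genuine gap is your second tier. The inequality $|\langle\ad^\dagger_\xi\xi,\xi\rangle_{H^{s'}}|\le C\|\xi\|_{C^1}\|\xi\|_{H^{s'}}^2$ is the load-bearing step of your proof and it is asserted, not proven; moreover it is doubtful in the stated form for general $s'\ge s+1$. Writing $\Lambda=(1-\Delta)^s$ and $m=\Lambda\xi$, the term reduces to $\langle \xi\cdot\nabla m+\dots,(1-\Delta)^{s'-2s}m\rangle_{L^2}$ with $m\in H^{s'-2s}$, and the commutator you must control is $[(1-\Delta)^{(s'-s)},\ad_\xi]$ (equivalently $[(1-\Delta)^{(s'-2s)/2},\xi\cdot\nabla]$ acting on $m$). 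Already for $s'=s+1$ this commutator is a differential operator whose coefficients involve two derivatives of $\xi$; for larger $s'-s$ it involves up to $2(s'-s)+1$ derivatives, and the Kato--Ponce route produces terms such as $\|\xi\|_{H^{\sigma}}\|\nabla m\|_{L^\infty}$ that are \emph{not} controlled by the first-tier $L^\infty H^s$ bound (note also that for $s'>2s$ the space $H^{s'-2s}$ has negative order, where these product and commutator estimates are more delicate). Consequently the Gronwall inequality does not close as written: the constant in front of $\|\xi\|_{H^{s'}}^2$ depends on intermediate norms between $H^s$ and $H^{s'}$, and one would need a ladder of estimates (or an Ebin--Marsden ``no loss, no gain'' argument for the forced geodesic equation) to make this rigorous. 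This is real work that your sketch defers, and it is exactly the work the paper's softer argument --- weak convergence of the control plus strong $H^s$ convergence of the state plus Lemma \ref{ThmWeakComposition} --- is designed to avoid. If you do carry it out, you would obtain a strictly stronger conclusion than the paper (an a priori $L^\infty H^{s'}$ bound on the minimizer's Eulerian velocity), but as it stands the proof is incomplete at its central step.
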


Before proving the theorem, we prove the following lemmas:
\begin{lemma}\label{ThmExistenceLagrangian}
Let $\alpha \in L^2([0,1],H^s)$, then there exists a unique solution defined on $[0,1]$ to the system
\begin{subequations} \label{ReducedForcedEvolution}
\begin{align}
&\dot{\varphi} = v\\ 
&\dot{v} = -\Gamma(\varphi)(v,v) + \alpha \circ \varphi \,, \label{Acceleration}
\end{align}
\end{subequations}
for given initial conditions $\varphi(0) = \varphi_0 \in D^{s+1}$ and $v(0) = v_0 \in H^{s+1}$.
\end{lemma}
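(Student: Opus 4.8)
\emph{Proof strategy.} The plan is to read \eqref{ReducedForcedEvolution} as a single non-autonomous first-order ODE on the tangent bundle $T\mc D^{q}(\ms T^d)\cong\mc D^{q}\times H^{q}$,
\[
\frac{d}{dt}(\varphi,v)=\big(v,\,-\Gamma(\varphi)(v,v)\big)+\big(0,\,\alpha(t)\circ\varphi\big),
\]
that is, as the geodesic spray of the $H^s$ metric plus a time-dependent forcing. The structural input is the regularity result underlying Theorem \ref{Thmdiff_hopf_rinow}: for a smooth right-invariant $H^s$ metric the geodesic spray is a \emph{smooth} vector field on $T\mc D^q$ for every $q\ge s$ \cite{Ebin1970,CompletenessDiffeomorphismGroup}, so the Christoffel map $(\varphi,v)\mapsto\Gamma(\varphi)(v,v)$ is smooth and, crucially, loses no derivative. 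The unforced part is therefore as well-behaved as a finite-dimensional ODE, the prescribed data $(\varphi_0,v_0)\in\mc D^{s+1}\times H^{s+1}$ sits inside this phase space, and the entire difficulty is concentrated in the forcing $\alpha\circ\varphi$.

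First I would record the composition estimates on the flat torus. For $\alpha\in H^s$ and $\varphi\in\mc D^{s+1}$ one has $\|\alpha\circ\varphi\|_{H^s}\le C(\|\varphi\|_{\mc D^{s+1}},\|\varphi^{-1}\|_{\mc D^{s}})\,\|\alpha\|_{H^s}$, which together with the continuity of $t\mapsto\varphi(t)$ makes $t\mapsto\alpha(t)\circ\varphi(t)$ Bochner measurable and bounded by $C\,\|\alpha(t)\|_{H^s}$, hence an element of $L^2([0,1],H^s)\subset L^1$; this settles the time dependence. The delicate point is the dependence on $\varphi$: since $\alpha$ carries only $H^s$ regularity, $\varphi\mapsto\alpha\circ\varphi$ is continuous but \emph{not} Lipschitz into $H^s$, because differentiating it produces the factor $\nabla\alpha\in H^{s-1}$, and one only obtains the Lipschitz-with-loss bound
\[
\|\alpha\circ\varphi_1-\alpha\circ\varphi_2\|_{H^{s-1}}\le C\,\|\alpha\|_{H^s}\,\|\varphi_1-\varphi_2\|_{H^{s+1}}.
\]
This one-derivative gap between the forcing and the ambient regularity is the main obstacle of the proof, and it is what forces one to read the prescribed data at the level $\mc D^{s+1}\times H^{s+1}$.

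I would then produce a local solution by a Carathéodory/Picard scheme adapted to this loss. Uniqueness I would get from a Gronwall argument carried out in the weaker norm $H^{s-1}$: subtracting the equations for two solutions, the spray contributes through its local Lipschitz constant and the forcing through the displayed estimate, the $H^{s+1}$ norms being kept under control by the a priori bounds; the interpolation $\|\cdot\|_{H^s}^2\le\|\cdot\|_{H^{s-1}}\,\|\cdot\|_{H^{s+1}}$ then upgrades the conclusion to the natural topology. For existence I would set up a fixed-point iteration for $v$ in $C([0,T],H^s)$ with $T$ small; the forcing operator $\varphi\mapsto\alpha\circ\varphi$ sends bounded sets of $\mc D^{s+1}$ into precompact sets of $H^s$ (composition is continuous and $H^{s+1}\hookrightarrow H^s$ is compact by Rellich on $\ms T^d$), so a Schauder-type compactness argument yields a fixed point, and the Lipschitz-with-loss estimate shows it is the unique one. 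The $L^2$ time dependence of $\alpha$ is absorbed throughout by the Carathéodory formulation.

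Finally, global existence on all of $[0,1]$ follows from an a priori bound ruling out blow-up. Writing $\xi=v\circ\varphi\i$ for the right-trivialized velocity, \eqref{Acceleration} is the forced Euler--Poincaré equation $\dot\xi+\ad^\dagger_\xi\xi=\alpha$ coming from \eqref{Covariant_derivative_vf_prop}, and since $\langle\ad^\dagger_\xi\xi,\xi\rangle_{H^s}=\langle\xi,\ad_\xi\xi\rangle_{H^s}=0$ by the very definition \eqref{ad-dagger-def} of $\ad^\dagger$ and the antisymmetry of the bracket, testing against $\xi$ gives
\[
\frac{d}{dt}\,\tfrac12\|\xi\|_{H^s}^2=\langle\alpha,\xi\rangle_{H^s}\le\|\alpha\|_{H^s}\,\|\xi\|_{H^s},
\]
whence $\frac{d}{dt}\|\xi\|_{H^s}\le\|\alpha\|_{H^s}$ and $\sup_{t}\|\xi(t)\|_{H^s}\le\|\xi_0\|_{H^s}+\|\alpha\|_{L^2([0,1],H^s)}$. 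Thus the Riemannian speed, and hence the $\on{dist}^s$-length of the trajectory, stays finite; by the completeness of $(\mc D^s_0,\on{dist}^s)$ from Theorem \ref{Thmdiff_hopf_rinow} the curve $\varphi(t)$ cannot reach the metric boundary in finite time, so the maximal solution extends up to $t=1$. Any extra regularity of $\alpha$ (as in the application, where $\alpha\in H^{s'}$ with $s'\ge s+1$) is propagated by exactly the same estimates, which is what will be needed when this lemma is fed into Theorem \ref{ThmSplines}.
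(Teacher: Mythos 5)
Your global-existence mechanism is the same as the paper's: the energy identity (the paper writes it as $f'(t)=\langle\alpha(t),u(t)\rangle_{H^s}$ with $u=v\circ\varphi^{-1}$; you derive it from the forced Euler--Poincar\'e form and $\langle\ad^\dagger_\xi\xi,\xi\rangle_{H^s}=0$, which is the same computation), then Gronwall, finiteness of the $\on{dist}^s$-length of the trajectory, and metric completeness of $\mc D^s$ from Theorem \ref{Thmdiff_hopf_rinow} to continue past any would-be blow-up time. Where you diverge is the local theory. The paper disposes of it in one line (``the system is Lipschitz on $\mc D^s\times H^s$, apply Carath\'eodory''), whereas you argue that for $\alpha$ merely in $H^s$ the map $\varphi\mapsto\alpha\circ\varphi$ is continuous but not Lipschitz into $H^s$. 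On this point you are right and the paper is glossing: a translation computation in Fourier variables shows that the Lipschitz constant of $\varphi\mapsto\alpha\circ\varphi$ into $H^s$ is finite only when $\alpha\in H^{s+1}$ (which holds in the application to Theorem \ref{ThmSplines}, where $\alpha\in H^{s'}$ with $s'\ge s+1$, but not under the hypothesis of the lemma as stated). So your instinct to replace the Lipschitz argument is sound.

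However, your replacement has two genuine gaps. First, the compactness you invoke for the Schauder step --- that $\varphi\mapsto\alpha\circ\varphi$ sends bounded sets of $\mc D^{s+1}$ into precompact sets of $H^s$ --- is not available along the flow: since $\dot v=-\Gamma(\varphi)(v,v)+\alpha\circ\varphi$ and the forcing lies only in $H^s$, the velocity $v(t)$, and hence $\varphi(t)$, leaves $H^{s+1}$ instantly even though the initial data sits in $\mc D^{s+1}\times H^{s+1}$; the iteration therefore takes place in $\mc D^{s}\times H^{s}$, where bounded sets are not precompact and the range of the forcing operator is not obviously precompact either. Since Peano's theorem fails in infinite-dimensional Banach spaces, continuity of the right-hand side alone does not yield local existence, so this step needs repair (for instance: mollify $\alpha$ to $\alpha_n\in H^{s+1}$, solve by the Lipschitz theory, obtain uniform $H^s$ bounds, and show the approximants are Cauchy in $C([0,T],H^{s-1})$ using your loss-of-one-derivative estimate --- the standard Kato-type scheme). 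Second, your displayed ``Lipschitz-with-loss'' bound controls $\|\alpha\circ\varphi_1-\alpha\circ\varphi_2\|_{H^{s-1}}$ by $\|\varphi_1-\varphi_2\|_{H^{s+1}}$; for the uniqueness Gronwall in $H^{s-1}$ to close, the right-hand side must involve $\|\varphi_1-\varphi_2\|_{H^{s-1}}$ (which does hold, by the mean-value form of the difference and the algebra property of $H^{s-1}$ for $s-1>d/2$), and you additionally need the spray difference to be controlled in the $H^{s-1}$ norm on $H^s$-bounded sets, a tame estimate that does not follow merely from smoothness of the spray on $\mc D^q$ for $q\ge s$. These points are fixable, but as written the local existence and uniqueness steps are not complete.
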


\begin{proof}
Solutions exist for short time since the system is Lipschitz on $\mc D^s \times H^s$ and the existence theorem for Caratheodory equation gives the result.
\\
Existence for all time is not guaranteed a priori since the second equation of is quadratic in $v$. Let us denote $u = v \circ \varphi^{-1}$ and $f(t):= \frac 12 g(\varphi)(v,v) = \frac 12 \| u \|_{H^s}^2$. Deriving it in time gives $f'(t) = \langle \alpha(t), u(t)\rangle_{H^s}$, so that by the Cauchy-Schwarz inequality, we get
\begin{align}\label{GronwallEnergy}
& f(t)  \leq \int_0^t \sqrt{f(z)} \| \alpha \|_{H^s} \ud z \leq  \| \alpha \|_{L^2([0,1],H^s)}  \sqrt{\int_0^t f(z) \ud z}\\
& f(t) \leq  \| \alpha \|_{L^2([0,1],H^s)} \left(1 +  \int_0^t f(z) \ud z \right) \,.
\end{align}
Using Gronwall's lemma, it implies that $f(t)$ is bounded for on $[0,T]$ where $T$ is the supremum (possible blow-up) time of definition. Therefore, $r := \int_0^T \| u \|^2_{H^s} \, \ud t = \int_0^T f(t) \ud t < \infty$ which means that $u \in L^2([0,T],H^s)$. As a consequence, for all time $t \in [0,T[$, $\varphi(t) \in B(x_0,r)$, which is the ball of radius $r$ for the geodesic distance on $D^s$. In addition, $\lim_{t\to T} \varphi(t)$ is well defined since $D^s$ is metrically complete. 
Remark that $\Gamma(\varphi(t))$ is bounded (uniformly in time) as an operator on $H^s \times H^s$ since $\Gamma$ is smooth on $\mc D^s$ and thus continuous on the path $\varphi(t)$. In particular, the right-hand side of the equation \eqref{Acceleration} belongs to $L^1([0,T],H^s)$ so that $v(T):= \int_0^T - \Gamma(\varphi(t))(v(t),v(t)) + \alpha(t)  \o \varphi(t) \ud t$. Using short time existence on $(\varphi(T),v(T))$, the solution can be extended for short time from time $T$ so that in fact $T=1$.
 \end{proof}

In the following lemma, we study the solutions of the system \eqref{ReducedForcedEvolution} but written on the dual of the tangent space at identity. As mentioned in Section \ref{Background}, one can rewrite the minimization as in Equation \eqref{ReducedSplinesFunctionalDualLieAlgebra}, however the "dual" acceleration is measured using with the corresponding dual norm. In our case, the dual norm (w.r.t. to $H^s$) associated with $H^{s+1} \subset H^s$ is $(H^{s-1})^* \subset (H^s)^*$ as can be seen by a direct computation in Fourier spaces.

\begin{lemma}\label{ThmExistenceEulerian}
Let $a \in L^2([0,1],(H^{s-2})^*)$ then the following integral equation
\begin{equation}\label{IntegralFormulation}
m(t) = Ad_{g(t)^{-1}}^*(m(0)) + \int_0^t Ad_{g_{t,s}}^*(a(s)) ds \,,
\end{equation}
with initial condition $m(0) \in (H^{s-2})^*$ has a unique solution in  $C^0([0,T],(H^{s})^*)$.
If $a \in L^2([0,1],(H^{s-1})^*)$, then there exists a solution to the integral equation with initial condition in $m(0) \in (H^{s-1})^*$.
\end{lemma}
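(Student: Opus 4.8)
The plan is to read \eqref{IntegralFormulation} as the variation-of-constants (Duhamel) representation of the linear momentum equation
\begin{equation}
\dot m + \ad^*_\xi m = a\,,
\end{equation}
where $\xi = v \o \varphi\i$ is the Eulerian velocity of the flow $g(t) = \varphi(t)$ produced by Lemma~\ref{ThmExistenceLagrangian}. The coadjoint propagator $t \mapsto Ad^*_{g(t)\i}$ solves the homogeneous equation, since $\frac{d}{dt} Ad^*_{g(t)\i} = -\ad^*_{\xi(t)} Ad^*_{g(t)\i}$, and convolving the two-parameter kernels $Ad^*_{g_{t,s}}$ against the source $a$ yields the particular solution. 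Because the flow, hence every propagator on the right-hand side, is entirely fixed by the (unique) Lagrangian solution, the right-hand side of \eqref{IntegralFormulation} is \emph{explicit} and contains no occurrence of the unknown $m$; uniqueness is therefore immediate, and the whole content of the lemma is that this expression defines an element of $(H^s)^*$ for each $t$ and that $t \mapsto m(t)$ is norm-continuous into $(H^s)^*$.

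The analytic core is the behaviour of the coadjoint action on negative-order Sobolev spaces. Writing $Ad^*_\varphi$ as the pullback of a momentum density, i.e. in coordinates $Ad^*_\varphi m = |\det D\varphi|\,(D\varphi)^T (m \o \varphi)$, I would combine the regularity furnished by Lemma~\ref{ThmExistenceLagrangian} --- namely $\varphi(t) \in \mc D^s$ with $\int_0^T \| \xi \|_{H^s}^2 \ud t < \infty$ --- with the standard Sobolev composition and product (Moser/tame) estimates to control $Ad^*_{\varphi(t)}$ as an operator on the relevant dual space, uniformly for $t \in [0,T]$, with norm bounded by the (finite) energy of the path. The point to track is the loss of derivatives: since $\varphi$ is only $H^s$-regular one has $D\varphi \in H^{s-1}$ and $m \o \varphi$ is bounded only down to order $s-1$, so duality makes $Ad^*_{\varphi(t)}$ bounded on $(H^{s-2})^*$ and on $(H^{s-1})^*$, whereas its continuity in the diffeomorphism argument holds robustly only into the weaker space $(H^s)^*$. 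This derivative loss in the continuity of the pullback is exactly the origin of the gap between the data space and the solution space.

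With these estimates the two statements follow. For $a \in L^2([0,1],(H^{s-2})^*)$ and $m(0) \in (H^{s-2})^*$, the term $Ad^*_{g(t)\i} m(0)$ is bounded in $(H^{s-2})^*$ and, by the continuity just described together with continuity of $t \mapsto g(t)$ in $\mc D^s$ (the flow map \eqref{eq:intro_flow}), depends continuously on $t$ in $(H^s)^*$; the integral term is treated with the uniform operator bounds, the inclusion $L^2 \subset L^1$ in time, and dominated convergence, again yielding continuity into $(H^s)^*$. Summing the two contributions gives $m \in C^0([0,T],(H^s)^*)$, and uniqueness is the explicitness noted above. The second, higher-regularity assertion is the same argument run one order up: when $a$ and $m(0)$ lie in $(H^{s-1})^*$, the $\mc D^s$-regularity of the flow is exactly enough for the borderline boundedness of $Ad^*$ on $(H^{s-1})^*$, and the Duhamel formula again produces a solution that is continuous into $(H^s)^*$.

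The step I expect to be the main obstacle is the Sobolev analysis of the pullback operator on negative-order spaces: obtaining boundedness with the correct derivative count and, above all, the continuity in the diffeomorphism variable, which is precisely where the two-order gap between the data space $(H^{s-2})^*$ and the continuity space $(H^s)^*$ is produced. Everything else --- the Duhamel bookkeeping, the uniqueness, and the dominated-convergence argument --- is routine once these coadjoint estimates are in place.
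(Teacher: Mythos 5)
There is a genuine gap, and it stems from a misreading of the equation. In \eqref{IntegralFormulation} the flow $g_{t,s}$ is \emph{not} an externally given propagator: as the paper's proof states, it is the flow generated by the vector field associated with $m$ itself (via the metric isomorphism $\xi = m^\sharp$), so that \eqref{IntegralFormulation} is the integral form of the Euler--Poincar\'e equation $\dot m + \ad^*_\xi m = a$ with $\xi$ determined by the unknown $m$. The right-hand side therefore does contain the unknown, the problem is genuinely nonlinear, and your claim that uniqueness is ``immediate'' because the expression is ``explicit'' collapses; had $g$ been fixed, the phrase ``has a unique solution'' in the statement would be vacuous. The paper's actual argument for the first part is a contraction mapping on $(C^0([0,T],(H^{s})^*),\|\cdot\|_\infty)$: one must estimate $\Psi(m_1)-\Psi(m_2)$, which requires a \emph{Lipschitz} estimate of $g \mapsto Ad^*_g\alpha$ of the form $\| Ad^*_{g_1}\alpha - Ad^*_{g_2}\alpha \|_{(H^{s})^*} \leq M_0 \|\alpha\|_{(H^{s-2})^*} \| g_1 - g_2\|_{H^{s-2}}$, combined with Lipschitz dependence of the flow on the momentum. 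This is precisely where the two-order gap between $(H^{s-2})^*$ and $(H^{s})^*$ is used --- not, as you suggest, in a mere continuity statement for the pullback along a known path.

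The misreading also makes you miss the structure of the second assertion. It is not ``the same argument run one order up'': at the level $a \in L^2([0,1],(H^{s-1})^*)$ the dependence of the flow on the momentum is only continuous, not Lipschitz, so the contraction fails; the paper instead approximates $a$ by $a_n \in L^2([0,1],(H^{s-2})^*)$, shows the corresponding solutions $m_n$ are bounded in $H^1([0,1],(H^{s-1})^*)$, extracts a strongly convergent subsequence in $L^2([0,1],(H^{s})^*)$ via the Aubin--Lions--Simon theorem, and passes to the limit in \eqref{IntegralFormulaBis}. This is exactly why the lemma claims only existence, and not uniqueness, in that case --- a distinction your proposal does not register. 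Your observations on the boundedness of $Ad^*_\varphi$ on negative-order Sobolev spaces and on the loss of derivatives in the diffeomorphism variable are pertinent raw estimates, but the fixed-point and compactness architecture that turns them into the lemma is absent.
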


\begin{proof}
The proof of the existence for $a \in L^2([0,1],(H^{s-2})^*)$ follows a standard fixed point method. Let $\Psi : C^0([0,T],(H^{s})^*) \to C^0([0,T],(H^{s})^*)$ defined by Formula \eqref{IntegralFormulation} namely
\begin{equation} \label{IntegralFormulaBis}
\Psi(m)(t) = Ad_{g(t)^{-1}}^*(m(0)) + \int_0^t Ad_{g_{t,s}}^*(a(s)) ds\,,
\end{equation}
where $g_{t,s}$ is the flow of diffeomorphims generated by the vector field associated with $m$.
Remark first that $\Psi(m)$ lies in $C^0([0,T],(H^{s})^*)$ which is well-defined since the integrand is integrable.
Now we claim that the map is a contraction on $(C^0([0,T],(H^{s})^*),\|\cdot\|_{\infty})$ for $T$ small enough,
\begin{multline}
\|\Psi(m_1) - \Psi(m_2)\|_{\infty} \leq \sup_{t \in [0,T]} \| Ad_{g_1(t)^{-1}}^*(m(0)) - Ad_{g_2(t)^{-1}}^*(m(0)) \|_{(H^{s})^*} + \\
\int_0^T \|Ad_{{g_1}_{t,s}}^*a(s)-Ad_{{g_2}_{t,s}}^*a(s)\|_{(H^{s})^*} \, ds \,.
\end{multline}
We need to estimate for $\alpha \in (H^{s-2})^*$ and $w \in H^{s}$. There exists a constant $M_0 > 0$ such that
\begin{align*}
\langle Ad_{g_1}^*(\alpha) - Ad_{g_2}^*(\alpha),w \rangle_{L^2} &\leq \| \alpha \|_{(H^{s-2})^*}   \| Ad_{g_1}(w) - Ad_{g_2}(w) \|_{H^{s-2}}\\
&M_0 \| \alpha \|_{(H^{s-2})^*}   \| g_1 - g_2 \|_{H^{s-2}} \| w\|_{H^{s}}
\end{align*}
Moreover, there exists a constant $M_1 > 0$ s.t. $$\sup_{s,t \in [0,T]} \| {g_1}_{t,s} - {g_2}_{t,s} \|_{H^{s-2}} \leq M_0 \| m_1 - m_2\|_{L^2([0,T],(H^s)^*)}\,. $$
Therefore there exists $M_1$ s.t.
\begin{equation} \label{LipProp}
\|\Psi(m_1) - \Psi(m_2)(t)\|_\infty \leq M_1 \sqrt{t} \| m_1 - m_2\|_{\infty} ( \| m(0) \|_{(H^s)^*} + \int_0^T \|a(s)\|_{(H^s)^*} \, ds ) \,.
\end{equation}
Hence the map $\Psi$ is a contraction for $t$ small enough and it therefore proves the existence and uniqueness of a solution of formula \eqref{IntegralFormulation} for short times.
Remark that Equation \eqref{LipProp} gives an upper bound $t_{lip} := \left(M_1 (\| m(0) \|_{W_2^*} + \int_0^T \|a(s)\|_{W_2^*} \, ds) \right)^{-2}$ such that for every $t<t_{lip}$, $\Psi$ is a contraction. In addition this upper bound is valid at any time $t \in [0,T]$.
Then the existence and uniqueness until time $T$ follows straightforwardly by an iterative application of the short-time result.
\par
The second part of the proof consists in showing existence of solutions for $a \in L^2([0,1],(H^{s-1})^*)$ which is done using a compactness argument. Let $a_n$ converging to $a$ in $L^2([0,1],(H^{s-1})^*)$, then the solution $m_n \in C^0([0,T],(H^{s})^*)$ actually belongs to $H^1([0,1],(H^{s-1})^*)$ because $\| Ad_g^*(m) \|_{(H^{s-1})^*} \leq M_2 \| g \|_{H^s} \| m \|_{(H^{s-1})^*}$. By the Aubin-Lions-Simon theorem, $H^1([0,1],(H^{s-1})^*)$ is compactly embedded in $L^2([0,1],(H^{s})^*)$, thus one can extract a strongly convergent sequence in $L^2([0,1],(H^{s})^*)$. By theorem \ref{Thmdiff_hopf_rinow}, the flow associated with the momentum $m_n$, denoted by  $g_{s,t}^n$ strongly converges in $H^s$ (actually uniformly in $s,t$) to $g_{s,t}$ the flow associated with the limit $m$. Then, it implies that the integrand $Ad_{g^n_{t,s}}^*(a_n(s))$ in Formula \eqref{IntegralFormulaBis} converges to $Ad_{g_{t,s}}^*(a(s))$ in $(H^{s-1})^*$. Since the integrand is bounded uniformly, the Lebesgue  convergence theorem applies and the result is obtained.
\end{proof}

\begin{remark}
\begin{enumerate}
\item The reason why we are not able to treat the case of $a \in L^2([0,1],(H^{s-1})^*)$ is because  the flow map in Theorem  \ref{Thmdiff_hopf_rinow} is only continuous and (possibly) not Lipschitz.
\item Note that Lemma \ref{ThmExistenceEulerian} can be considered as the Eulerian version of Lemma \ref{ThmExistenceLagrangian} and the latter achieves a better result since uniqueness of the solution is proven in $H^{s+1}$. However, our proof of the main theorem will require the use of Lemma \ref{ThmExistenceEulerian} which gives the fact that the Eulerian velocity of the solutions of Lemma \ref{ThmExistenceLagrangian} are bounded in $H^1([0,1],(H^s)^*)$.
\end{enumerate}
\end{remark}

\begin{lemma}\label{ThmWeakComposition}
Let $s > d/2 + 1$,
$\alpha_n \in H^s$ weakly converging to $\alpha$ and $\varphi_n \in \on{Diff}^s$ which strongly converges to $\varphi$, then the composition $\alpha_n \circ \varphi_n$ weakly converges to $\alpha \circ \varphi$.
\end{lemma}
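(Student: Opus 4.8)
The plan is to show that $\alpha_n\circ\varphi_n$ converges weakly to $\alpha\circ\varphi$ in $H^s$ by the standard two-step route: first establish that the sequence is bounded in $H^s$, then identify its limit by proving convergence in a weaker topology. Since $H^s$ is a Hilbert space, a bounded sequence that converges in $L^2$ (even only weakly) to some limit $\ell$ necessarily converges weakly to $\ell$ in $H^s$; this uses only the density of $L^2$ in $(H^s)^*$ together with the uniform $H^s$-bound, via a routine $\varepsilon/3$ argument against elements of $(H^s)^*$ approximated by $L^2$ functionals. So it suffices to produce the $H^s$-bound and the $L^2$-limit.

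For the boundedness I invoke the composition estimate $\|f\circ\psi\|_{H^s}\le C(\|\psi\|_{H^s})\,\|f\|_{H^s}$, valid for $s>d/2+1$, where the constant depends only on $\|\psi\|_{H^s}$ and on a lower bound for the Jacobian of $\psi$. Since $\varphi_n\to\varphi$ in $\on{Diff}^s$, the norms $\|\varphi_n\|_{H^s}$ stay bounded and the Jacobians are uniformly bounded below (for $n$ large, the finitely many remaining terms being harmless), so $C(\|\varphi_n\|_{H^s})$ is uniformly bounded; as $\alpha_n$ is weakly convergent and hence $H^s$-bounded, the compositions $\alpha_n\circ\varphi_n$ are bounded in $H^s$.

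To identify the $L^2$-limit I decompose
\[
\alpha_n\circ\varphi_n-\alpha\circ\varphi=(\alpha_n-\alpha)\circ\varphi_n+(\alpha\circ\varphi_n-\alpha\circ\varphi).
\]
The second term tends to $0$ strongly in $H^s$ by joint continuity of the composition map $(\psi,f)\mapsto f\circ\psi$ on $\on{Diff}^s\times H^s$, applied with $f=\alpha$ fixed and $\psi=\varphi_n\to\varphi$. For the first term, set $\beta_n:=\alpha_n-\alpha$, so that $\beta_n\rightharpoonup 0$ in $H^s$ with $\|\beta_n\|_{H^s}$ bounded. Because $s>d/2+1$, the inclusion of $H^s$ into $C^0$ factors through the compact embedding of $H^s$ into $H^{s-1}$ followed by the continuous embedding $H^{s-1}\hookrightarrow C^0$ (valid since $s-1>d/2$); hence $H^s$ embeds compactly in $C^0$ and $\beta_n\to 0$ strongly in $C^0$. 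As $\varphi_n$ is a bijection of $\ms T^d$, one has $\|\beta_n\circ\varphi_n\|_{C^0}=\|\beta_n\|_{C^0}\to 0$, so $\beta_n\circ\varphi_n\to 0$ uniformly and therefore in $L^2$. Combining the two terms, $\alpha_n\circ\varphi_n\to\alpha\circ\varphi$ in $L^2$, and with the $H^s$-bound this yields the claimed weak convergence in $H^s$.

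The one genuinely technical point is the uniform $H^s$-boundedness of the compositions, which rests on the boundedness with controlled constants of the composition operator $f\mapsto f\circ\psi$ on $H^s$ and on its joint continuity in $(\psi,f)$ — properties belonging to the smooth-manifold machinery of $\on{Diff}^s$ recorded around Theorem \ref{Thmdiff_hopf_rinow}. Everything else is soft: the compact Sobolev embedding converts weak $H^s$ convergence of $\beta_n$ into strong $C^0$ convergence, and the passage from weak $L^2$ to weak $H^s$ convergence is the density argument above. I expect no difficulty beyond keeping the composition constants uniform along the convergent sequence $\varphi_n$.
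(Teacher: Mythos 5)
Your proof is correct, and it reaches the conclusion by a genuinely different route than the paper for the key step of identifying the limit. Both arguments begin identically: boundedness of $\alpha_n\circ\varphi_n$ in $H^s$ via the composition estimate with constants controlled along the convergent sequence $\varphi_n$, followed by the standard principle that a bounded sequence in $H^s$ converging against a dense or total subfamily of $(H^s)^*$ converges weakly. Where you diverge is in how the limit is pinned down. The paper tests against Radon measures $m\in (H^s)^*\cap\mathcal{M}$ and uses the change-of-variables identity $\langle \alpha_n\circ\varphi_n, m\rangle = \langle \alpha_n, (\varphi_n)_* m\rangle$, reducing the claim to the strong convergence of the pushforwards $(\varphi_n)_* m$ in $(H^s)^*$ paired with the weakly convergent $\alpha_n$. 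You instead split $\alpha_n\circ\varphi_n-\alpha\circ\varphi=(\alpha_n-\alpha)\circ\varphi_n+(\alpha\circ\varphi_n-\alpha\circ\varphi)$, kill the second term by continuity of composition, and upgrade the weak $H^s$ convergence of $\beta_n=\alpha_n-\alpha$ to strong $C^0$ convergence via the compact embedding $H^s\hookrightarrow C^0$, exploiting that the sup norm is invariant under composition with a bijection. Your route is more elementary and self-contained (no pushforward of measures, no density of $\mathcal{M}\cap(H^s)^*$ in $(H^s)^*$), but it does use compactness of the underlying domain for the Rellich step — harmless here since the result is applied on $\ms T^d$ (or a bounded domain), though worth stating explicitly since the lemma as written does not name the domain. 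The paper's duality argument avoids compact embeddings entirely and only needs continuity of $m\mapsto \varphi_* m$ along the strongly convergent $\varphi_n$.
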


\begin{proof}
We prove the weak convergence by proving that the sequence is bounded and that it weakly converges on a dense set of $H^s$. First, remark that $\| \alpha_n \circ \varphi_n\|_{H^s}$ is bounded in $H^s$ since the composition by a diffeomorphism in $\on{Diff}^s$ is bounded (see \cite[Lemma 2.2]{CompletenessDiffeomorphismGroup}).
\par
Let $m \in (H^{s})^* \cap \mathcal{M}$ where $\mathcal{M}$ denotes the space of Radon measures, consider $\langle \alpha_n \circ \varphi_n , m \rangle_{L^2}$, which can be written by a change of variable as 
\begin{equation}
\langle \alpha_n \circ \varphi_n , m \rangle_{L^2} = \langle \alpha_n , (\varphi_n)_*(m) \rangle_{L^2}\,,
\end{equation}
where $(\varphi_n)_*(m)$ is the pushforward of $m$ by $\varphi_n$. Since $\varphi_n$ is strongly convergent in $\on{Diff}^s$, we have that $(\varphi_n)_*(m)$ strongly converge in $(H^{s})^*$ to $\varphi_*(m)$. Therefore, $\langle \alpha_n , (\varphi_n)_*(m) \rangle_{L^2}$ converges to $\langle \alpha , \varphi_*(m) \rangle_{L^2}$, which gives the result.
\end{proof}

\begin{proof}[Proof of the theorem]
First note that the space $\Om_{0,1}(\mc G_{H^{s'}(\Om,\R^d)})$ is non-empty: Consider a path connecting $\varphi_0$ and  $\varphi_1$ in $\mc G_{H^{s'}(\Om,\R^d)}$, thus, by concatenation of paths, the problem is reduced to $\varphi_0 = \varphi_1 $ where the path can be easily defined on the tangent space in a neighborhood of $\on{Id}$ or in a local chart. 
We will use natural coordinates, i.e. $\on{Id} + H^{s}$ to describe elements of the loop space $\Om_{0,1}(\mc G_{H^{s'}(\Om,\R^d)})$.
The term $\left\| \frac{D}{Dt} \dot{\varphi} \o \varphi\i \right\|^2_{H^{s'}}$ can be written in coordinates $$\| \left( \ddot{\varphi} + \Gamma(\varphi)(\dot{\varphi},\dot{\varphi}) \right) \circ \varphi^{-1} \|_{H^{s'}}^2$$ so that, it is natural to introduce the change of variable $\left( \dot{v} + \Gamma(\varphi)(v,v) \right) \circ \varphi^{-1} := \alpha$ where $\dot{\varphi} = v$.
The variational problem \eqref{SplinesFunctional} can be rewritten as the minimization of 
the functional defined on the Hilbert space $L^2([0,1],H^{s'})$
\begin{equation}
\ell (\alpha) = \int_0^1 \| \alpha(t) \|^2_{H^{s'}}  \ud t \,,
\end{equation}
under the constraint
\begin{equation} \label{ReducedForcedEvolution}
\begin{cases}
\dot{\varphi} = v \\
\dot{v} = -\Gamma(\varphi)(v,v) + \alpha \o \varphi \,,
\end{cases}
\end{equation}
and the boundary conditions, $\varphi(0) = \varphi_0$, $\varphi(1) = \varphi_1$ and $v(0) = v_0$, $v(1) = v_1$.
\par
This functional is lower semi-continuous on $L^2([0,1],H^{s'})$. Let $\alpha_n$ be a minimizing sequence in $L^2([0,1],H^{s'})$ weakly converging to $\alpha$. The condition to be checked is the constraints that have to be satisfied at the limit. Using Lemma \ref{ThmExistenceEulerian}, the sequence $v_n \circ \varphi_n^{-1} \in H^1([0,1],H^{s-1})$ is bounded  and one can extract a strongly converging sequence in $C^0([0,1],H^{s})$ which implies the strong convergence of $\varphi_n(1)$ and $v_n(1)$ in $H^s$. The first consequence is that the boundary constraints $\varphi(1),v(1)$ are satisfied at the limit. It also implies that the term $\Gamma(\varphi_n)(v_n,v_n)$ is strongly convergent in $H^s$ and the term $\alpha_n \circ \varphi_n$ is weakly convergent to $\alpha \circ \varphi$, using Lemma \ref{ThmWeakComposition}. Therefore, we have the equality $v(t) = v(0) + \int_0^t -\Gamma(\varphi(s))(v(s),v(s))  +  \alpha \circ \varphi \ud s$, which implies that the couple $(\varphi,v)$ is the solution of the integral equation associated with System \eqref{ReducedForcedEvolution}.
\end{proof}

\begin{theorem}[Spline interpolation of time sequences]
Let $\varphi_1,\ldots,\varphi_n$ be $n$ diffeomorphisms in $\on{Diff}_0^{s+1}$ and $t_1 <\ldots < t_n$ be a sequence of $n$ positive reals. There exists a path $\varphi(t) \in \on{Diff}_0^{s+1}$, which minimize the acceleration functional
\begin{equation}\label{EqSplinesSequence}
\| \dot{\varphi}(0)\circ \varphi\i \|^2_{H^{s'}} + \int_{t_1}^{t_n} \left\| \frac{D}{Dt} \dot{\varphi} \o \varphi\i \right\|^2_{H^{s'}}  \ud t\,,
\end{equation}
among all curves satisfying $\varphi(t_i) = \varphi_i$ for $i \in 1,\ldots,n$.
\end{theorem}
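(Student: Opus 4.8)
The plan is to recast the problem in the reduced form used for Theorem~\ref{ThmSplines} and to run the direct method, the only genuinely new features being the several interior interpolation constraints and the fact that the initial velocity is free but penalized rather than prescribed. First I would introduce the change of variables $\alpha = \left(\dot v + \Gamma(\varphi)(v,v)\right)\o\varphi\i$ with $v = \dot\varphi$ and rewrite \eqref{EqSplinesSequence} as the minimization of
\begin{equation*}
\ell(\alpha, v_{\mathrm{init}}) = \left\| v_{\mathrm{init}} \o \varphi_1\i \right\|^2_{H^{s'}} + \int_{t_1}^{t_n} \| \alpha(t) \|^2_{H^{s'}} \ud t
\end{equation*}
over controls $\alpha \in L^2([t_1,t_n],H^{s'})$ and initial velocities $v_{\mathrm{init}} = \dot\varphi(t_1)$, subject to the forced evolution \eqref{ReducedForcedEvolution} with $\varphi(t_1) = \varphi_1$ and to the interpolation constraints $\varphi(t_i) = \varphi_i$ for $i = 2,\ldots,n$. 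By Lemma~\ref{ThmExistenceLagrangian} each admissible pair $(\alpha, v_{\mathrm{init}})$ determines a unique trajectory on the whole interval $[t_1,t_n]$ with continuous velocity, so that the velocity matching at the interior nodes is automatic and only the position constraints remain to be imposed.

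The admissible set is non-empty: on each subinterval $[t_i,t_{i+1}]$ I would invoke the non-emptiness argument of Theorem~\ref{ThmSplines} with vanishing boundary velocities to produce a finite-energy path from $\varphi_i$ to $\varphi_{i+1}$ that starts and ends at rest; concatenating these segments yields an admissible curve with continuous (indeed vanishing) velocity at every node and with zero initial-velocity penalty. Next I would take a minimizing sequence $(\alpha_n, v_{\mathrm{init}}^n)$. Coercivity of $\ell$ bounds $\alpha_n$ in $L^2([t_1,t_n],H^{s'})$ and bounds the initial Eulerian velocity $v_{\mathrm{init}}^n \o \varphi_1\i$ in $H^{s'}$; here the penalty term is essential, as it is exactly what restores coercivity in the otherwise free initial-velocity direction. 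Passing to a subsequence, $\alpha_n \rightharpoonup \alpha$ weakly in $L^2(H^{s'})$, while $v_{\mathrm{init}}^n \o \varphi_1\i$ converges weakly in $H^{s'}$, hence strongly in $H^s$ by the compact embedding $H^{s'} \hookrightarrow H^s$ on the torus (recall $s' \geq s+1$).

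With these bounds in hand the compactness step mirrors the proof of Theorem~\ref{ThmSplines}: the uniform energy bound together with the bounded initial Eulerian velocity make the Gronwall estimate of Lemma~\ref{ThmExistenceLagrangian} uniform, so that Lemma~\ref{ThmExistenceEulerian} applies and, via the Aubin-Lions-Simon theorem, a subsequence of the Eulerian velocities $v_n \o \varphi_n\i$ converges strongly in $C^0([t_1,t_n],H^s)$. This gives strong convergence of $\varphi_n(t)$ and $v_n(t)$ in $H^s$ uniformly in $t$; in particular $\varphi_n(t_i) \to \varphi(t_i)$ at every node, so the interpolation constraints $\varphi(t_i) = \varphi_i$ survive in the limit, and the initial datum of the limit trajectory is the strong $H^s$ limit of $v_{\mathrm{init}}^n \o \varphi_1\i$. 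Then $\Gamma(\varphi_n)(v_n,v_n)$ converges strongly in $H^s$ while $\alpha_n \o \varphi_n \rightharpoonup \alpha \o \varphi$ by Lemma~\ref{ThmWeakComposition}, so $(\varphi,v)$ solves the integral form of \eqref{ReducedForcedEvolution} with control $\alpha$. Finally, weak lower semicontinuity of $\ell$ gives $\ell(\alpha,v_{\mathrm{init}}) \le \liminf_n \ell(\alpha_n, v_{\mathrm{init}}^n)$, so the limiting curve is a minimizer.

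The main obstacle I anticipate is the interplay between the free initial velocity and the nonlinear constraint: one must verify that the penalty provides just enough control to bound the initial Eulerian velocity in $H^{s'}$ (so that its weak limit exists and its $H^s$-convergence is strong), and that this uniform bound feeds correctly into the Gronwall estimate of Lemma~\ref{ThmExistenceLagrangian} so that the Eulerian compactness is uniform across the whole minimizing sequence. By contrast, the several interior constraints are harmless, being preserved by the uniform-in-time strong convergence in $C^0([t_1,t_n],H^s)$.
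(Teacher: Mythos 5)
Your proposal is correct and follows essentially the same route as the paper, whose own proof simply states that the argument of Theorem~\ref{ThmSplines} carries over; you fill in exactly the intended details (reduction to the control $\alpha$, non-emptiness by concatenation of rest-to-rest segments, the direct method with Lemmas~\ref{ThmExistenceLagrangian}, \ref{ThmExistenceEulerian} and \ref{ThmWeakComposition}, and preservation of the nodal constraints under uniform convergence). You also correctly single out the one genuinely new point the paper emphasizes, namely that the initial-velocity penalty is what supplies coercivity and keeps the minimizing sequence in a bounded metric ball.
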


\begin{proof}
The proof is similar to that of Theorem \ref{ThmSplines} and we do not repeat the arguments here. Note that the penalization on the initial speed seems necessary in order for the curve to stay in a bounded metric ball.\footnote{On the flat 2D torus, straight lines with irrational slopes are dense and they can be parametrized with arbitrarily high velocity so that the infimum of \eqref{EqSplinesSequence} is $0$ without speed penalization.}
\end{proof}

\section{Conclusion}

This theoretical proof of existence was provided to fill in the gap of the variational models proposed in \cite{SinghVN15}. However, we do have treated the case of the induced metric on the space of images, which was also implemented in \cite{SinghVN15}. However, with minor modifications, the approach developed in this article can possibly adapted. On a more theoretical point of view, we leave the open question if the approach can be adapted for $s'>s$.

\bibliographystyle{alpha}
\small{\bibliography{these,SecOrdLand,sum_of_kernels,articles,articles2,Thesis,bibchizat,MesPapiers,references3,preprints,articles4,references,articles5,EntropicNumeric,refs,SecOrdLand,deforNew,shoot}}

\end{document}